\pgfplotsset{compat=newest}
\let\Gamma\varGamma
\let\Theta\varTheta
\let\Lambda\varLambda
\let\Xi\varXi
\let\Pi\varPi
\let\Sigma\varSigma
\let\Upsilon\varUpsilon
\let\Phi\varPhi
\let\Psi\varPsi
\let\Omega\varOmega
\newcommand{\R}{\mathbb{ R}}
\newcommand{\N}{\mathcal N}%
\providecommand{\cont}[1]{\operatorname C^{#1}}
\providecommand{\leb}[1]{\operatorname L^{#1}}
\providecommand{\sobh}[2][]{\operatorname H^{#2}\ifx|#1|\else\if#1D_\Dir\else_{#1}\fi\fi}
\newcommand{\mLL}{{\leb2(\Omega;\leb2(Y))}}
\newcommand{\MLL}{{\leb2(\Omega)}}
\newcommand{\mH}{{\sobh1(\Omega;\sobh1(Y))}}
\newcommand{\MH}{{\sobh1(\Omega)}}
\newcommand{\B}{\mathcal{ B}}
\newcommand{\dealii}{\textsf{deal.II}}
\newcommand{\K}{\mathcal{ K}}
\newcommand{\comp}{\circ}
\newcommand{\sys}{\eqref{eq:macro-2xmicro-system}}
\newcommand{\manusys}{\eqref{eq:manu_pde_formulation}}
\renewcommand{\div}{\operatorname{div}}
\renewcommand{\vec}{\mathbf}
\providecommand{\Gammax}[2][x]{\ensuremath{\Gamma_{#1}^{\text{#2}}}}
\providecommand{\Neu}{\mathrm{N}}
\providecommand{\Dir}{\mathrm{D}}
\providecommand{\dx}{\d x}
\providecommand{\dy}{\d y}
\newtheorem{theorem}[subsection]{Theorem}
\newtheorem{definition}[subsection]{Definition}
\title{Parallel two-scale finite element implementation of a system with varying microstructures}
\renewcommand{\d}{\operatorname d\!}
\newcommand\BibTeX{{\rmfamily B\kern-.05em \textsc{i\kern-.025em b}\kern-.08em
T\kern-.1667em\lower.7ex\hbox{E}\kern-.125emX}}
\newcommand{\AM}{\textcolor{black}}%
\newcommand{\CV}{\textcolor{black}}%
\newcommand{\OLak}{\textcolor{black}}%
\newcommand{\D}{\operatorname{D}}
\renewcommand{\vec}[1]{\boldsymbol{#1}}
\providecommand{\d}{}
\renewcommand{\d}{\operatorname d}
\title[Parallel 2-scale FEM with varying microstructure]{Parallel two-scale finite element implementation of a system with varying microstructure}
\author[O. Lakkis]{Omar Lakkis}
\address{O.L. and C.V., Department of Mathematics, University of Sussex, England, UK}
\email{lakkis.o.maths@gmail.com}
\email{C.Venkataraman@sussex.ac.uk}
\author[A. Muntean]{Adrian Muntean}
\address{A.M., Department of Mathematics and Computer Science, Karlstad University, SE}
\email{adrian.muntean@kau.se}
\author[O. Richardson]{Omar Richardson}
\address{O.R., Simula Consulting AS, Oslo, NO}
\email{omar.richardson@gmail.com}
\author[C. Venkataraman]{Chandrasekhar Venkataraman}
\begin{document}
\maketitle

\begin{abstract}
  We propose a two-scale finite element method designed for
  heterogeneous microstructures.  Our approach exploits domain
  diffeomorphisms between the microscopic structures to gain
  computational efficiency.  By using a conveniently constructed
  pullback operator, we are able to model the different microscopic
  domains as macroscopically dependent deformations of a reference
  domain.  This allows for a relatively simple finite element framework
  to approximate the underlying system of partial differential equations
  with a parallel computational structure.  We apply this technique to a
  model problem where we focus on transport in plant tissues.  We
  illustrate the accuracy of the implementation with convergence
  benchmarks and show satisfactory parallelization speed-ups.  We
  further highlight the effect of the heterogeneous microscopic
  structure on the output of the two-scale systems.  Our implementation
  (publicly available on GitHub) builds on the \dealii{} FEM library.
  Application of this technique allows for an increased capacity of
  microscopic detail in multiscale modeling, while keeping running costs
  manageable.
\end{abstract}

\section{Introduction}
Models involving transport and diffusion phenomena interacting at
multiple scales (multiscale) are ubiquitous in the natural sciences
and engineering \cite{weinan11}.  Multiscale modeling is a key tool
for developing effective techniques to describe these phenomena, which
may otherwise be computationally intractable.  Specifically, assuming
scale-separation in models allows us to examine the interplay between
processes active on vastly different length and time scales; defining,
for example, phenomena on \emph{macroscales} and \emph{microscales}
\cite{horstemeyer90,steinhauser2017computational,Simpson2018,
  engquist2009multiscale, Seguin2020}. In most practically relevant
cases, the microscales are active in the sense that they are hosting
localized phase transitions described mathematically by moving
interfaces with \emph{a priori} known or unknown velocities. \AM{The
  case of known interface velocities is both mathematically and
  computationally well-understood (cf. e.g. \cite{Eden2017}), while
  the case of unknown interface velocities is still a matter of
  concern; compare \cite{Peter2022,Gahn2023, Wiedemann2023,eden2024}
  for some very recent asymptotic analysis results concerning closely
  related reaction-diffusion scenarios which arise in the context of
  reactive transport in porous media.}

Here, we consider a flow process that takes place on two distinct
physical scales.  In the simplest setting, we can identify a
macroscopic scale where a model governs the behavior of a
(macroscopic) fluid in which averages over the microscopic scale enter
as source terms, as well as a microscopic scale that includes
diffusion-based transport and chemical reactions on heterogeneous
domains where the macroscopic fluid concentration plays the role of a
parameter.  We propose a computational framework to model the effect
of heterogeneous microscopic geometries on microscopic and macroscopic
solution profiles, where the species involved satisfy a system of
partial differential equations (PDEs).  Resolving the multiscale
structure of the PDEs, especially the heterogeneous microstructure,
requires careful consideration to avoid computational overload yet
make sure the required accuracy on the macroscale is achieved.  We
present an efficient and robust parallel two-scale finite element
method for the approximation of such systems.  Our focus in the
present work is the consideration of linear systems of equations that
preserve the main challenges that arise from the multiscale nature of
the systems whilst sidestepping the added technical complications that
arise in the nonlinear systems of more interest in applications. We
describe in detail practical aspects related to the implementation,
such as visualization of computational results and we report on
parallelization of the algorithm.  We also showcase the effect of
microstructure heterogeneity by presenting simulations of a simple
linear model for transport with heterogeneous
microstructures. \CV{Although the focus is on linear elliptic systems,
  such systems typically arise after linearisation in the resolution
  of nonlinear parabolic or elliptic systems which are relevant in
  applications, hence this work and the associated algorithm and code
  provides a useful contribution to the numerical solution of
  nonlinear problems.}

As an example model to test the numerical method, we propose a system
of PDEs describing the steady-state profile of transport processes in
plants.  The transport of many substances such as water, hormones and
nutrients, occurs through cell scale processes.  An important and
widely studied example is that of Auxin transport
\cite{twycross2010stochastic}, where active influx and efflux carriers
on cell membranes allow for polarity of the transport process.
Establishing and maintaining precise concentration profiles of Auxin
in plant tissues is crucial in many developmental processes, making
the problem inherently multiscale.  Plant cells are orders of
magnitude smaller than the size of the plant itself. Moreover, these
cells vary greatly depending on plant age and location relative to the
center of the stem.  Taking these considerations into account can
greatly complicate the process of developing a reliable computational
model.  In particular, using a single-scale model to resolve plant
geometries in detail is extremely challenging from a computational
point of view \cite{twycross2010stochastic}. A number of recent
studies focus on multiscale modeling with contributions ranging from
modeling growing process \cite[e.g.]{lockhart1965analysis,
  jensen2015multiscale}) to topological constructions
\cite[e.g.]{godin1998multiscale}.  In \cite{raats07} the author
proposes a system of equations that models the consumption of water by
plant roots and in
\cite{allen2020mathematical,chavarria2010homogenization} the authors
consider multiscale modeling of Auxin transport. The framework we
develop is in fact of relevance to all of these applications as our
simulations of the linear model for transport in plant tissues show in
\Cref{sec:experiments}.

Our main aim in the present work is to provide researchers and
practitioners with a computational tool that helps make
scale-separated problems with varying microstructures computationally
tractable. Our focus is on the simplest relevant models in order to
elucidate the underlying principles whilst avoiding unnecessary
technicalities.  However, the techniques presented in this manuscript
are not limited to the simple models considered here. They can be
flexibly extended to more complex multiscale problems, involving
different kinds of physics such as coupling mechanics and fluid
transport.

Solving systems of PDEs, including multiscale ones, on complex
geometries and with singular data often requires the use of
unstructured grids or meshes.  Owing to this, finite element
approaches, known for their flexible use in complex geometries and
relative ease in adaptive meshing, have received substantial attention
for multiscale PDEs since the turn of the century.  Computational
power has increased exponentially, but demands on model features and
accuracy have managed to keep pace and use such power.  Currently no
single stratagem dominates the state-of-the-art.

It strongly depends in the problem at hand whether a given technique
will prove effective or not. Rather than a single technique,
multiscale modeling is a paradigm on how to relate said scales and
make them interact.  Multiscale models for PDEs can be classified in
several different ways. One classification discriminates on whether
the scales are physically embedded in each other or fully
scale-separated~\cite{weinan11,pavliotis08}.  The first class thus
formed allows for a size relation between a microscopic scale and a
macroscopic scale and they must share spatial dimension; this class of
models is studied, among others, by \cite{hellman2015multiscale} and
\cite{efendiev13}. The second class, referred to as \emph{fully
separate scale} models, allows for different space dimensions at
various scales, e.g., a one-dimensional microscopic domain within in a
three-dimensional macroscopic domain.  Our approach deals with this
second class. Other examples of this technique can be found in for
instance \cite{radu10}.

Multiscale models can also be classified in terms of how they encode
patterns in microscopic structures. Techniques derived from
homogenization often assume periodic (if not homogeneous) microscopic
structures, as for instance demonstrated in \AM{\cite{Arbogast1990}
  when deriving the double-porosity structure for a flow problem; see
  also the Ref. \cite{raats1981} for an early study in this
  direction. Among the other contributions that focus specifically on
  varying or evolving microstructures in a double-porosity case (here
  referred to as two-scale) is \cite{noorden11}.} This structure
\AM{allows} more tools for a rigorous mathematical analysis, but uses
assumptions that are often too strong for practical applications.
Since there is often an inherent variation present in microscopic
structures, investigations such as \cite{sebamPhD} focus on developing
tools that allow for the modeling of that variation\AM{, coping also
  with situations not necessarily obtainable via averaging procedures
  like homogenization. For results concerning the numerical
  approximation of such class of problems, ee refer the reader, for
  instance, to
  \cite{hou97,Varvara,hoang05,abdulle12,garttner20,Bastidas}.}

Since a higher heterogeneity in the medium inevitably leads to more
computational work, several contributions focus on improving
computational strategies for multiscale simulations.  Two of the most
popular (and often-combined) approaches are employing adaptive grids
for the discretization of the system of equations (adaptivity) and
distributing the workload over different processors (parallelization):
see for instance \cite{farhat1991method, komatitsch2010high} and
\cite{Verfurth:13:book:A-posteriori}. In our framework, we apply a
parallelization technique.  For completeness, we mention that
alternative solution approaches for multiscale flow and transport are
studied as well, see \cite[e.g.]{PesShow,raats1981,redeker13,maes20}.

The rest of the paper is structured as follows: in \Cref{sec:model} we
discuss the model PDE system and its analytic properties. In
\Cref{sec:FEM} we propose a finite element method and discuss its
implementation. In \Cref{sec:manufactured} we report on numerical
experiments conducted with this method and showcase the computational
advantages with benchmarking and comparisons, including a parallel
implementation and an application to test the dependence on the
microscopic geometry. We close the paper with some conclusions.

\section{A two-scale model with varying microstructures}
\label{sec:model}
Before posing the model problem, we introduce the two-scale setting
and the heterogeneous microscopic structure. Let $\Omega \subset
\R^{d_1}$ be the macroscopic domain and $Y \subset\R^{d_2}$ be a
microscopic domain with $d_1,d_2 \in \{1,2,3\}$. \AM{Both domains
  $\Omega$ and $Y$ are assumed to have Lipschitz boundaries.}  For
each $x \in \Omega$ we define a microscopic domain $Y_x \subset Y$
\AM{with $Y_x \cap Y=\emptyset$.}  We denote the boundary of $\Omega$
with $\partial \Omega$, consisting of mutually disjoint parts
$\partial\Omega^\Neu$ and $\partial\Omega^\Dir$ such that $\partial
\Omega = \partial\Omega^\Neu \cup \partial\Omega^\Dir $, and the
boundary of $Y_x$ with $\Gamma_x$, consisting of mutually disjoint
parts $\Gammax I$ , $\Gammax O$ and $\Gammax N$, such that $\Gamma_x =
\Gammax I \cup \Gammax O \cup \Gammax N$. The different boundaries on
the microscopic domain are presented in~\Cref{fig:schema}.  We proceed
to define the composite (multiscale) domain $\Lambda$ as
\begin{equation}
  \Lambda := \bigcup_{x\in\Omega} \{ x \} \times Y_x.
\end{equation}
Let $\zeta \in\cont0(\bar{\Omega};\cont{1}(\bar{Z},\bar{Y}))$ be a
given mapping. \OLak{Then, for each $x\in\Omega$ the microscopic
  domains $Y_x$ is the image of a base domain $Z \subset Y$, common to
  all $x$, under a mapping $\zeta(x,\cdot)$}:
\begin{equation}
  \zeta(x,Z) = Y_x.
\end{equation}
\OLak{We assume that $\zeta(x,\cdot)$ is differentiable and invertible
  in its second argument, for each $x\in\Omega$,} that
\begin{equation}
  \AM{0<}c_* \leq \mathrm{det}\nabla_y \zeta(x,\cdot) \leq c^*,
\end{equation}
It follows the partial inverse is smooth
\begin{equation}
  \zeta_y^{-1} \cont0(\bar{\Omega};\cont{1}(\bar{Y},\bar{Z})).
\end{equation}

Similar setups of heterogeneous microscopic structures are presented
in \cite{sebamPhD,lakkis13,noorden11}.  In continuum mechanics, this
concept is often referred to as motion mapping
\cite{bonet1997nonlinear}.  See \Cref{fig:schema} for a schematic
representation of $\Omega$ and $Y_x$. The boundary portions
$\Gamma_I$, $\Gamma_O$ and $\Gamma_N$ represent the inflow, outflow
and no-flow boundary of $Y_x$, respectively.

\begin{figure}%
  \begin{tikzpicture}[scale=0.4]
    \def\l{10}; \def\r{1}; \draw[black,fill=gray] plot [smooth cycle]
    coordinates {(\r*8, \r*2) (\r*9, \r*4) (\r*8, \r*7.5) (\r*5.5,
      \r*9) (\r*4, \r*8.5) (\r*2, \r*5) (\r*2.5, \r*2) (\r*7, \r*1)};
    \node at (0.6*\l,0.45*\l + 2) (macro) {\large $\Omega$}; \node at
    (0.6*\l,0.30*\l+7.5) (macroboundary) {\large $\partial \Omega$};

    \coordinate (x) at (\l*0.6, 0.3*\l); \draw[black] (2.1*\l,0.3*\l)
    coordinate (SW) -- (2.9*\l,0.2*\l) coordinate (SE)
    node[pos=0.5,below]{$\Gammax N$} -- (2.9*\l,0.8*\l) coordinate
    (NE) node[pos=0.5,right]{$\Gammax O$} -- (2.1*\l,0.7*\l)
    coordinate (NW) node[pos=0.5,above]{$\Gammax N$} -- cycle
    node[pos=0.5,left]{$\Gammax I$}; ; \path (x) node[left] {\large
      $x$}; \draw[thick,black!25] (x) -- (SW);\draw[thick,black!12.5]
    (x) -- (NE); \draw[thick,black!12.5] (x) -- (SE);
    \draw[thick,black!25] (x) -- (NW);
    \draw[black,fill=black!50,opacity=.5]
    (SW)--(SE)--(NE)--(NW)--cycle; \node at (2.5*\l,0.5*\l) (micro)
         {\large $Y_x$};
  \end{tikzpicture}
  \centering
  \caption{Schematic representation of the multiscale domain: at each
    macroscopic point $x\in\Omega$ corresponds a microscopic domain
    $Y_x$ with mixed boundary conditions (pure Neumann or
    Robin).}\label{fig:schema}
\end{figure}
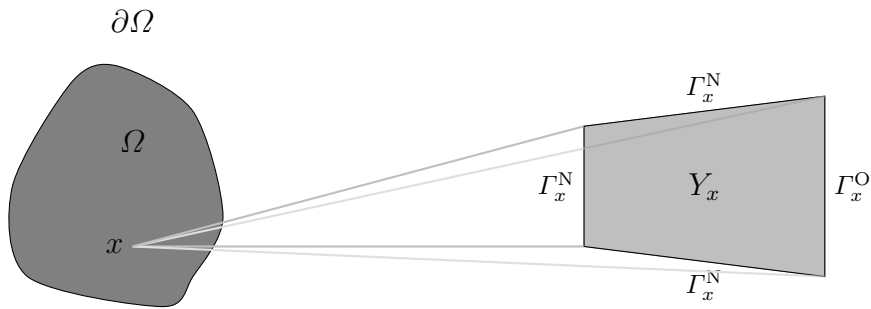

\subsection{Model problem}
The following system of equations should primarily be regarded as a
test problem for the two-scale finite element framework that is the
main focus of this work. \AM{The reader should not seek {\em per se} a
  derivation of this model via homogenization as it does not have a
  natural one. This is simply a collection of mass balance laws posed
  on different microscopic and macroscopic sets.}  In order to
illustrate the applicability of \AM{our computational} framework, we
describe an interpretation of the model as a description of transport
processes in plants mediated by cell-scale influx and efflux. Under
such an interpretation the macroscopic quantity $u$ represents a
nutrient that the plant absorbs or produces, e.g., water absorbed from
the soil. This nutrient is in turn taken in at cell membranes at an
influx part of the membrane $\Gammax I$, within the cells the nutrient
concentration is represented by microscopic quantity $v$. In the
cells, the nutrient $v$ is converted to a product $w$ which is emitted
into the tissue at outflux regions of the cell membranes
$\Gammax{O}$. Such a description is consistent with simplifications of
many models for Auxin and water transport in plants present in the
literature
\cite[e.g.]{twycross2010stochastic,chavarria2010homogenization}. As
mentioned in the introduction, a major novelty of the model and the
computational framework is that we allow for quite general cell
geometries. In~\Cref{sec:experiments} we illustrate the dependence of
both macroscopic and microscopic solution profiles on the cell
geometries.

Let $u,v:\Omega\to\R$ and $v:\Lambda\to\R$ satisfy the following
system of equations:
\begin{equation}
  \label{eq:macro-2xmicro-system}
  \begin{aligned}
    - \Delta_x u = f^u - \int_{\Gammax I} \kappa_1 u - \kappa_2 v
    \d\sigma_y &\mbox{ in } \Omega,\\ u = u_0 &\mbox{ on }
    \partial\Omega^\Dir ,\\ \nabla_x u\cdot n_\Omega = 0 &\mbox{ on }
    \partial\Omega^\Neu ,\\[2.5ex] -D^v \Delta_y v = f^v &\mbox{ in }
    \Lambda,\\ D^v\nabla_y v \cdot n_{Y_x} &= \begin{cases} \kappa_1u
      - \kappa_2v &\mbox{ on } \Gammax I\\ \kappa_3w - \kappa_4v
      &\mbox{ on } \Gammax O\\ 0 &\mbox{ on } \Gammax N
    \end{cases},\\[2.5ex]
    -\div\left(D^w \nabla_x w\right) = f^w -\int_{\Gammax 0} \kappa_3w
    - \kappa_4v \d\sigma_y &\mbox{ in } \Omega,\\ D^w \nabla_x w \cdot
    n_\Omega = 0 &\mbox { on } \partial\Omega,
  \end{aligned}
\end{equation}
where $D^w \in \leb \infty(\Omega)$, $u_0 \in \leb 2(\partial
\Omega^\Dir )$ and $D^v, \kappa_1, \kappa_2, \kappa_3, \kappa_4 \in
\R$.  In the rest of this manuscript, this system is referred to as
\sys{}.  The function space $Q$ is defined as
\begin{equation}
  Q := \sobh[D]1 (\Omega) \times \leb 2 \left( \Omega; \sobh1 (Y_x)
  \right) \times \sobh1 (\Omega),
\end{equation}
where $\sobh[D]1 (\Omega)$ is the Sobolev space with Dirichlet values
\begin{equation}
  \sobh[D]1 (\Omega) = \left\{ \phi \in \sobh1 (\Omega) \middle|
  \phi|_{\partial \Omega^\Dir} = 0 \right\}
\end{equation}
and the slightly inconsistently denoted $\leb2(\Omega,\sobh1(Y_x))$ is
rigorously defined as
\begin{equation}
  \left\{ \varphi\in\leb2(\Lambda) :
  \exists\,\phi\in\leb2(\Omega;\sobh1(Z)) :
  \varphi(x,\cdot)=\phi(x,\zeta^{-1}(x,\cdot)) \text{ a.e. }x\in\Omega
  \right\}.
\end{equation}
We construct the weak form of \sys{} by multiplying the equations with
test functions from the triplet $(\phi, \psi, \tilde{\phi}) \in Q$ and
integrating over the respective domains. This results in the following
problem: find a triplet of functions $(u,v,w) \in Q$ such that
\begin{equation}
  \begin{split}
    \int_\Omega \nabla_x u \cdot \nabla_x \phi \dx&= \int_\Omega \big(
    f^u - \int_{\Gammax I} \kappa_1 u - \kappa_2 v \d\sigma_y
    \big)\phi \dx , \\ \int_\Omega \int_{Y_x} D^v \nabla_y v\cdot
    \nabla_y \psi \dy\dx &= \int_{\Omega} \int_{Y_x} f^v \psi \dy\dx +
    \int_{\Omega} \int_{\Gammax I} (\kappa_1 u - \kappa_2 v ) \psi
    \d\sigma_y\dx\\ &+ \int_{\Omega} \int_{\Gammax O} (\kappa_3w -
    \kappa_4v) \psi \d\sigma_y \dx,\\ \int_\Omega D^w \nabla_x w \cdot
    \nabla_x \tilde{\phi} &= \int_\Omega \left( f^w - \int_{\Gammax O}
    \kappa_3w - \kappa_4v \d\sigma_y \right) \tilde{\phi} \dx,
  \end{split}
  \label{eq:system_weak}
\end{equation}
for all triplets of functions $(\phi, \psi, \tilde{\phi}) \in Q$.
\begin{definition}\label{def_solution}
  \addcontentsline{toc}{subsection}{\thesubsection.\ Weak solution definition}
  We call the triplet $(u,v,w) \in Q$ a \emph{weak solution} to \sys{} if it
  satisfies the identities listed in \eqref{eq:system_weak} for any
  choice of test functions $(\phi, \psi, \tilde{\phi}) \in Q$.
\end{definition}

\subsection{Assumptions on data}
\label{sse:assumptions}
Before proceeding with solvability of \sys{} we state the assumptions
on which we base the proof:
\begin{enumerate}
\item\label{as:lips_omega}$\Gammax I \neq \emptyset$ and $\Gammax O
  \neq \emptyset$ for all $x\in\Omega$; {$\Gammax I$, $\Gammax 0$,
    and $\Gammax N$ are Lipschitz, while $Y_x$ is convex;} %
  think Lipschitz, convex for $Y_x$ is appropriate?
\item\label{as:lips_y} $\partial\Omega^\Dir \neq \emptyset$;
\item\label{as:dirichlet} $u_0 \in \leb 2(\partial \Omega^\Dir)$;
\item\label{as:parameters} $D^w \in \leb \infty(\Omega)$, $D^w, D^v >
  0$, $f^u, f^w \in \leb 2(\Omega)$ and $f^v \in \leb 2(\Omega;\leb
  2(Y_x))$;
\item \label{as:kappa}$\kappa_1,\dots, \kappa_4 > 0$;
\item \label{as:coerc} We impose the following structural relations on
  the model parameters:
  \begin{equation}
    \begin{split}
      \frac{|\kappa_1 - \kappa_2|}{2}\left|\Gammax I\right| &<
      1.\\ \frac{|\kappa_3 - \kappa_4|}{2} \left|\Gammax O\right| &<
      \min_{x\in\bar\Omega}\{D^w(x)\}.
    \end{split}
  \end{equation}
\end{enumerate}
Assumptions~\ref{as:lips_omega}, \ref{as:lips_y} and
\ref{as:dirichlet} are geometric: they allow us to make sense of the
function spaces with distributed microstructures without too many
technicalities. Assumptions \ref{as:parameters} and \ref{as:kappa}
have a clear physical translation. They point out a set of parameters
for which solutions will turn to exist and to be
unique. Assumption~\ref{as:coerc} is a sufficient condition to prove
the coercivity of the bilinear form associated to \sys{}, without
being disrupted by the Robin boundary conditions.  It reveals the
physical interactions between the transmission coefficients
$\kappa_1,...,\kappa_4$, the size of the microstructure, and the
macroscopic diffusion coefficient.
\begin{theorem}[weak solvability]
  \addcontentsline{toc}{subsection}{\thesubsection.\ Weak solvability theorem}
  Under assumptions
  \ref{sse:assumptions}.\ref{as:lips_omega}--\ref{as:coerc} system
  \sys{} admits a unique solution in the sense of~\Cref{def_solution}
  which is also stable with respect to parameters.
\end{theorem}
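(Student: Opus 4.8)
The plan is to recast the weak formulation~\eqref{eq:system_weak} as a single variational problem on the product space $Q$ and apply the Lax--Milgram theorem. Writing $U=(u,v,w)$ and $\Phi=(\phi,\psi,\tilde\phi)$, I would collect all the left-hand sides into one bilinear form $a(U,\Phi)$ and all the data into one linear functional $L(\Phi)$, so that~\eqref{eq:system_weak} reads $a(U,\Phi)=L(\Phi)$ for every $\Phi\in Q$. Because the inflow/outflow couplings enter the $u$-, $v$- and $w$-equations with coefficients $\kappa_1,\kappa_2$ (respectively $\kappa_3,\kappa_4$) that need not coincide, $a$ is in general \emph{non-symmetric}, so Lax--Milgram rather than the Riesz representation is the right tool. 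The inhomogeneous condition $u=u_0$ on $\partial\Omega^\Dir$ is handled in the usual way, by reduction to the homogeneous trace space $\sobh[D]1(\Omega)$, the datum then entering only the stability constant.

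A preliminary step is to make all microscopic estimates uniform in $x$. Since every $Y_x=\zeta(x,Z)$ is a $\cont1$-diffeomorphic image of the single reference cell $Z$ with Jacobian determinant pinched between $c_*$ and $c^*$, pulling integrals back to $Z$ shows that a trace inequality of the form $\|\varphi\|_{\leb2(\Gamma_x)}\le C\|\varphi\|_{\sobh1(Y_x)}$ and a Poincaré inequality on $Y_x$ hold with constants depending continuously on $x$, hence uniformly bounded over the compact $\bar\Omega$. This uniformity lets me handle the fibred space $\leb2(\Omega;\sobh1(Y_x))$ as if the cell were fixed. Boundedness of $a$ and $L$ is then routine: Cauchy--Schwarz on each volume term together with the uniform trace estimates on $\Gammax I$ and $\Gammax O$ yields $|a(U,\Phi)|\le C\|U\|_Q\|\Phi\|_Q$ and $|L(\Phi)|\le C\bigl(\|f^u\|_{\MLL}+\|f^v\|_{\leb2(\Omega;\leb2(Y_x))}+\|f^w\|_{\MLL}+\|u_0\|_{\leb2(\partial\Omega^\Dir)}\bigr)\|\Phi\|_Q$, using that $u$ and $w$ are constant in $y$ so their traces on $\Gamma_x$ are controlled by their macroscopic norms.

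The heart of the matter, and the step I expect to be the main obstacle, is \emph{coercivity}. Testing with $\Phi=U$, the interior terms produce $\|\nabla_x u\|^2$, $D^v\|\nabla_y v\|^2$ and $\int_\Omega D^w|\nabla_x w|^2$, while the two Robin couplings collapse into the sign-indefinite boundary quadratics $\int_\Omega\int_{\Gammax I}(\kappa_1u-\kappa_2v)(u-v)\,\d\sigma_y\dx$ and $\int_\Omega\int_{\Gammax O}(\kappa_3w-\kappa_4v)(w-v)\,\d\sigma_y\dx$. Three positivity mechanisms must be combined: Poincaré via the nonempty $\partial\Omega^\Dir$ (Assumption~\ref{as:lips_y}) upgrades $\|\nabla_x u\|^2$ to full control of $\|u\|_{\MH}$; a Poincaré inequality on each $Y_x$ together with the nonempty $\Gammax I$ (Assumption~\ref{as:lips_omega}) and the positive boundary contributions $\kappa_2v^2,\kappa_4v^2$ controls $\|v\|_{\leb2(\Omega;\sobh1(Y_x))}$; and, since $w$ carries pure Neumann data, its $\leb2$-mass must be recovered from the positive term $\kappa_3w^2$ on $\Gammax O$ combined with $\int_\Omega D^w|\nabla_x w|^2$. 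To absorb the indefinite cross terms I would complete the square as $(\kappa_1u-\kappa_2v)(u-v)=\kappa_2(u-v)^2+(\kappa_1-\kappa_2)\,u(u-v)$, isolating the genuinely bad contribution with coefficient $|\kappa_1-\kappa_2|$; because $u$ is constant on $\Gammax I$, the factor $\int_{\Gammax I}u\,u\,\d\sigma_y=|\Gammax I|\,u^2$ makes the surface measure explicit, and Young's inequality bounds this piece by roughly $\tfrac{|\kappa_1-\kappa_2|}{2}|\Gammax I|$ times the $u$-mass (the analogous manipulation on $\Gammax O$, with $w$ constant there, produces $\tfrac{|\kappa_3-\kappa_4|}{2}|\Gammax O|$ times the $w$-mass). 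Assumption~\ref{as:coerc}, whose two thresholds are calibrated exactly against the unit diffusion of the $u$-equation and the minimal diffusion $\min_{\bar\Omega}D^w$ of the $w$-equation, then guarantees a strictly positive leftover, giving $a(U,U)\ge\alpha\|U\|_Q^2$. Lax--Milgram finally yields existence and uniqueness, and the a priori bound $\|U\|_Q\le\alpha^{-1}\|L\|_{Q'}$ together with the boundedness of $L$ gives the asserted stability with respect to the data and parameters. The delicate point is the bookkeeping: ensuring that the constants from the three distinct Poincaré/trace arguments, all rendered uniform in $x$ through $\zeta$, remain compatible with the sharp thresholds in Assumption~\ref{as:coerc}.
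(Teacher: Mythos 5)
Your proposal is essentially the paper's own proof: the paper likewise treats \sys{} as a linear coupled system of elliptic equations and settles existence, uniqueness and stability by a standard application of the Lax--Milgram lemma, with Assumption~\ref{sse:assumptions}.\ref{as:coerc} playing exactly the role of the coercivity threshold you describe (the paper omits all of the boundedness/coercivity bookkeeping that you sketch). The one substantive difference is how the Hilbert-space structure of $\leb2(\Omega;\sobh1(Y_x))$ --- which Lax--Milgram requires --- is justified: you argue via the uniform pullback to the reference cell $Z$ with Jacobian bounds, whereas the paper invokes the theory of direct integrals of Hilbert spaces and the associated trace spaces $\leb2(\Omega;\leb2(\Gammax I))$, $\leb2(\Omega;\leb2(\Gammax O))$, citing the literature; both routes are legitimate, and yours is in fact consistent with how the paper defines that space in the first place.
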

\begin{proof}
  Problem \sys{} is a linear and coupled system of elliptic
  equations. A standard application of the Lax--Milgram Lemma
  (cf. Theorem 1, on p. 317 in \cite{evans10}) clarifies the
  solvability. A potentially less straightforward aspect is the
  structure of the function space $\leb2(\Omega;\sobh1(Y_x))$. Based
  on (A1) and (A2), and relying on \cite{BR92},
  $\leb2(\Omega;\sobh1(Y_x))$ is a direct integral of Hilbert spaces,
  which is itself a Hilbert space together with its corresponding
  trace spaces $\leb 2(\Omega; \leb 2(\Gammax I))$ and $\leb 2(\Omega;
  \leb 2(\Gammax O))$. For more details on this topic, we refer the
  reader to \cite{meier08} or to Section 2.2 in \cite{sebamPhD}, which
  treats the topic of Sobolev spaces in non-cylindrical domains as
  used in this framework, as well as to the more recent account
  \cite{Evseev} for the $\leb p$ version of these spaces.
\end{proof}

\section{Finite element implementation}
\label{sec:FEM}
\subsection{Approximation}
Rather than constructing a mesh for each of the microscopic domains
$Y_x$, we use the structure in $\zeta(x, \cdot)$ to construct a mesh
for the reference domain $Z$ only and pull-back the basis functions of
$Y_x$ to $Z$.  This leads us to mesh partition $\B_H$ for $\Omega$ and
mesh partition $\K_h$ for $Z$.  From these meshes we construct the
following macroscopic and microscopic finite element spaces:
\begin{equation}
  \begin{aligned}
    U_H &:= \left\{ \left. u \in
    \operatorname{C}(\bar{\Omega})\right|\,u|_B \in \mathbb{P}^1(B)
    \mbox{ for all } B \in \B_H,\, u=0 \mbox{ on } \partial
    \Omega^\Dir \right\}, \\ V_h &:= \left\{ \left. \hat{v} \in
    \operatorname{C}(\bar{Z})\right|\,\hat{v}|_K \in \mathbb{P}^1(K)
    \mbox{ for all } K \in \K_h \right\}, \\ W_H &:= \left\{ \left. w
    \in \operatorname{C}(\bar{\Omega})\right|\,w|_B \in
    \mathbb{P}^1(B) \mbox{ for all } B \in \B_H\ \right\}.
  \end{aligned}
\end{equation}
The multispace structure of the finite element space is inspired by
constructions from
\cite{lind20,radu10,MunteanLakkis:10:inproceedings:Rate}.

Let $\N_1$ denote the set of degrees of freedom for $\B_H$. Let
$\xi_i$ for $i\in \N_1$ denote a set of basis functions such that $W_H
= \operatorname{span}(\xi_i)$ and $U_H \subset
\operatorname{span}(\xi_i)$.  Furthermore, let $\N_2$ denote the set
of degrees of freedom for $\K_h$ and let $\hat{\eta_j}$ for $j\in\N_2$
denote a set of basis functions such that $V_h =
\operatorname{span}(\hat{\eta_j})$.  Note that we use hats on
functions to indicate their correspondence to the reference domain
$Z$.  Now, approximating $(u,v,w)$ in the aforementioned finite
element spaces yields:
\begin{equation}
  u(x) = \sum_{i}\vec{u}_i \xi_i(x), \quad v(x,y)= \sum_{i,j}
  \vec{v}_{ij} \xi_{i}(x)(\zeta_i\comp\eta_j)(y), \quad w =
  \sum_{i}\vec{w}_i \xi_i,
\end{equation}

In addition, writing $\D_{\hat y}\zeta(x,\hat y)$ for the Jacobian
matrix of $\zeta$ in its second argument, define
\begin{equation}
  K(x,\hat y):=[\D_{\hat y} \zeta(x,\hat y)]^{-1},
  \label{eq:grad_map}
\end{equation}
and
\begin{equation}
  J(x,\hat y):= \det{\D_{\hat y} \zeta(x,\hat y)},
  \label{eq:det_map}
\end{equation}
to be able to express functionals of a generic function $v$ with
domain $Y_x$ as a functionals of
\begin{equation}
  \hat{v}(x,\hat y)=v(x,\zeta(x,\hat y))\text{ on $Z$}.
\end{equation}
Specifically, we use of the following change of variables:
\begin{equation}
  \begin{aligned}
    \int_{Y_x} v\d y =& \int_Z \hat vJ\d\hat{y}, \\ \int_{Y_x}
    \nabla_y v\d y =& \int_{Z} K^T \nabla_{\hat{y}} \hat{v}J
    \d\hat{y}, \\ \AM{ \int_0^T\int_{\partial Y_x} q v \d\sigma_y dt}
    =& \AM{\int_0^T\int_{\partial Z} |{\rm cof}(F)\nu_{\hat y}| \hat q
      \hat{v} \d\sigma_{\hat{y}} dt,}
  \end{aligned}
\end{equation}
\AM{which hold almost everywhere on $(0,T)\times\Omega$ for any
  function $\hat v\in\leb 2((0,T)\times\Omega; \sobh1 (Z))$ and $\hat
  q\in \leb 2((0,T)\times\Omega; \leb 2 (Z))$. Here ${\rm cof}(F):=
  ({\rm det} F) F^{-T}$ is the cofactor matrix of $F$, where $F$
  denotes here the Jacobian of the wanted transformation. }

Discretizing \eqref{eq:system_weak} and applying the transformations
above we obtain a discrete weak formulation where the microscopic
contributions only require reference cell $Z$:
\begin{equation}
  \begin{split}
    &\sum_{i}\int_\Omega \vec{u}_i\nabla_x \xi_i \nabla_x \xi_k\dx+
    \int_\Omega \vec{u}_i \xi_i \xi_k \int_{\Gamma^I}\kappa_1
    J\d\sigma_{\hat{y}}\dx \\ &\quad = \int_\Omega f^u - \left(
    \int_{\Gamma^I} (- \kappa_2 \vec{v}J \d\sigma_y \right) \xi_k \dx
    ,\\ &\sum_{i,j} \int_{\Omega \times Z} \vec{v}_{ij} D^v
    \xi_{i}\nabla_y \hat{\eta_j} KK^T\nabla_y \hat{\eta_l}
    \xi_{k}Jd\hat{y}\dx + \int_{\Omega \times \Gamma^I} \kappa_2
    \vec{v}_{ij} \xi_{i}\xi_{k} \hat{\eta_j}
    \hat{\eta_l}J\d\sigma_{\hat{y}}\dx \\ &\quad+ \int_{\Omega \times
      \Gamma^O} \kappa_4 \vec{v}_{ij} \xi_{i}
    \xi_{k}\hat{\eta_j}\hat{\eta_l}J\d\sigma_{\hat{\eta_l}}\dx \\ &
    \quad=\int_{\Omega \times Z} f^v \xi_{k}\hat{\eta_l}Jd\hat{y}\dx +
    \int_{\Omega \times \Gamma^I}
    \kappa_1\vec{u}\xi_{k}\hat{\eta_l}J\d\sigma_{\hat{y}}\dx +
    \int_{\Omega \times \Gamma^O} \kappa_3\vec{w}
    \xi_{k}\hat{\eta_l}J\d\sigma_{\hat{y}}\dx ,\\ &\sum_{i}\int_\Omega
    \vec{w}_iD^w\nabla_x \xi_i \nabla_x \xi_k + \int_\Omega \vec{w}_i
    \xi_i \xi_k \int_{\Gamma^O}\kappa_3J\d\sigma_{\hat{y}} \\ &\quad=
    \int_\Omega f^w - \left(\int_{\Gamma^O}- \kappa_4\vec{v}J
    \d\sigma_{\hat{y}}\right) \xi_k \dx .
  \end{split}\label{eq:discrete_weak_micro_reference}
\end{equation}
Using techniques similar to the ones presented in for instance
\cite{lind20}, it is possible to prove \emph{a priori} convergence
rates of the finite element approximation. We do not provide such an
analysis in this manuscript.
\subsection{Implementation}
We implement \eqref{eq:discrete_weak_micro_reference} using the finite
element library \dealii{} \cite{dealii}, a C++ library that
facilitates the creation and management of finite element
implementations. The project is released under an open-source license
and under active development. The choice for this library is motivated
by its features, which include the possibility of
dimension-independent implementations, support for many different
elements and support for different parallelization options.  Since
\dealii{} does not directly support multiscale systems of PDE, we
extend it with support for multiscale function and tensor objects.
The implementation is available on GitHub
\cite{Richardson:21:url:Finite}.
\subsection{Structure of FEM implementation}
A classic finite element implementation in \dealii{} is roughly
structured along the following steps:

\begin{enumerate}
\item Load a domain and generate a triangulation of that domain.
\item Distribute the degrees of freedom based on the triangulation and
  the degree of finite elements.
\item Initialize the system matrix and the right hand side and
  solution vector.
\item Assemble the linear system by looping over cells and integrate
  the discrete weak form for each quadrature point.
\item Apply the boundary conditions of the PDE to the linear system.
\item Precondition and solve the system, either directly or
  iteratively.
\item Post-process and output the solution.
\end{enumerate}

In our finite element framework, we define a microscopic system for
each macroscopic degree of freedom. More precisely, since we use basis
functions with local support, the support point of the macroscopic
basis function represents the location of the microscopic
system. Correspondingly, we associate a specific local value of $u$
with each system: the finite element function value in the coordinates
of the support point.  This allows us to apply the same finite element
framework for the microscopic functions over the macroscopic domains,
creating in essence a tensor product of finite element spaces.  The
system is solved by decoupling the microscopic and macroscopic
formulation. Iteratively, we solve first the macroscopic system, using
the microscopic solutions of the previous iterations as data, followed
by solving the microscopic system, using the most recently obtained
macroscopic as data. We do so until we observe the difference between
subsequent residuals is within a predetermined tolerance.

From a computational point of view, the effort associated with the
creation and solving of microscopic systems can quickly become
large. Applications that require solutions with high accuracies place
demands on both memory and CPU capacity. For this reason, we address
the computational load by customizing the implementation for
multiscale systems using caching techniques, domain mappings,
multithreaded assembly and distributed solving.  Furthermore, we
exploit the similarity between the different microscopic scales by
using the same data structures for the microscopic systems, where
possible.

\subsection{Domain mappings}
\label{sec:mapping}
Mapping $\zeta$ is, as described in \Cref{sec:model} implemented as a
bounded, not necessarily linear function of both $x$ and $y$.  $\zeta$
is supplied symbolically to the implementation, as well as the inverse
of the Jacobian $K(x,y)$ (see \eqref{eq:grad_map}) We avoid
recomputing these quantities by computing the structures $J$ and
$KK^TJ$ for each quadrature point and storing them in a hashmap for
fast access throughout the iterative assembly procedure.
Additionally, we apply the mapping in the multiscale functions, so
that by using the same assembly objects for each $x$, we can evaluate
all microscopic data on $Y_x$.

\subsection{Parallelization in assembly}
As mentioned before, we parallelize the implementation using
multithreading.  We divide computational the work into different
threads which are assigned to processors that share memory, meaning
that there is no need to specify communication between processors but
that special care is needed to avoid so-called 'race conditions',
where errors are made due to multiple threads writing to the same
memory structure at the same time, or one memory structure reading
from the memory while another one is updating it.

A commonly used alternative is a distributed memory approach (based on
for instance a message passing interface like MPI) where memory is
distributed.  This has the advantage of being able to be run on
clusters of computing nodes that do not share a memory space, but has
the disadvantage that data needs to be communicated explicitly and
that often leads to communication overhead and idle time when a
processor on a node is unused.  The advantages of using a
multithreading approach is that our settings lends itself well for
doing so; there is minimal setup and overhead involved in the
parallelization and the multiscale nature of the implementation
reveals a structure that lends itself well for threads.  A
disadvantage is that most larger high performance clusters are
composed of smaller compute nodes, requiring some sort of message
passing.  However, we remark that using a combination of
multithreading and MPI is a valid strategy as well and something that
could be applied on this setup too, as we discuss in
\Cref{sec:conclusions}.

We implement a multithreading approach using the Threading Building
Blocks library, for which \dealii{} has integrations in place.  In the
assembly loop, we loop over all cells in reference domain $Z$ to
compute the contributions of the associated degrees of freedom to the
system matrix and the system right hand side vector.  Rather than
consecutively looping over all cells per microscopic system, we
compute the contributions of all cells that correspond to the cell in
the reference domain and add them to their respective system matrices.
We use a thread-pool that assigns a new cell to a processor as soon as
that processor has finished work on its previous cell, since all
microscopic systems can be assembled independently.  The result of
this assembly step is a linear system for each microscopic finite
element formulation.  Both the macroscopic and microscopic systems are
assembled in parallel, with a per-cell approach.  This process can be
run in parallel, since computing the contribution to the system matrix
and the system right hand side of one finite element cell does not
require information from other cells.  This yields a linear system for
each microscopic formulation.

The residual threshold used in this framework is the sum of the
macroscopic and average microscopic residuals.  The solution procedure
is standard: we use the conjugate gradient method to solve the
systems. The systems are also solved in parallel, by distributing them
over the available processors.
\subsection{Visualization}
One of the most important post-processing steps of a simulation is
visualizing the results. We make use of the visualization toolbox
ParaView \cite{paraview} and the associated library VTK \cite{vtk}.
The multiscale nature of the problem creates non-standard demands for
the visualizations. In particular, since we solve the microscopic
systems on the same reference domain, visualizing all of them at once
is no longer a trivial task.

We need to push-forward the solution to the correct domain $Y_x$,
since the microscopic systems are solved on $Z$. \dealii{} outputs
data in a VTK-compliant format: an unstructured grid format
(\cite{vtk}). By applying the mapping on the transformation, we obtain
a visual representation of the solution on the correct domain.  In
addition, we apply a translation to the microscopic unit domains to
move them to their macroscopic location and scale for visualization
purposes. Finally, ParaView can read these data files and render a
visualization that provides an intuitive overview of the domain.  The
solutions to \sys{} are visualized using this technique and displayed
in \Cref{fig:u-case-a,fig:v-case-b}.
\section{Numerical experiments}
\label{sec:manufactured}
We test and benchmark the implementation by applying the method of
manufactured solutions on a system of PDE that generalizes \sys{}.
This allows us to see if our numerical scheme converges with the
desired order, by prescribing a solution and deriving the
corresponding PDE data.  The manufactured system is formulated as
follows:
\begin{equation}
  \begin{aligned}
    - \Delta u & = - \int_{\Gammax I} \kappa_1 u - \kappa_2 v + g_1^v
    \d\sigma_y + f^u && \mbox{ in } \Omega, \\ u & = u_0 + g^u_1 &&
    \mbox{ on } \partial\Omega^\Dir , \\ \nabla u\cdot n_\Omega & =
    g^u_2 && \mbox{ on } \partial\Omega^\Neu , \\-D^v \Delta v & = f^v
    && \mbox{ in } \Lambda, \\ D^v\nabla_y v \cdot n_{Y_x} &=
    \begin{cases}
      \kappa_1u - \kappa_2v + g^v_1 \\ \kappa_3w - \kappa_4v + g^v_2
      \\ g^v_3
    \end{cases}
    &&
    \begin{matrix}
      \mbox{on }\Gammax I, \\ \mbox{on }\Gammax O, \\ \mbox{on
      }\Gammax N,
    \end{matrix}
    \\-\div\left(D^w \nabla w\right) & = -\int_{\Gammax O} \kappa_3w -
    \kappa_4v + g_2^v \d\sigma_y + f^w && \mbox{ in }\Omega, \\ D^w
    \nabla w \cdot n_\Omega & = g^w && \mbox{ on }\partial\Omega.
  \end{aligned}
  \label{eq:manu_pde_formulation}
\end{equation}
\subsection{SymPy}
To facilitate the testing and benchmarking of the implementation, we
build an interface that given a pair of manufactured solutions $(u,v)$
and a mapping $\zeta$ constructs the data of \manusys{}.  This
interface uses the symbolic algebra package SymPy \cite{sympy} and
outputs all required functions in a \dealii{}-compliant parameter
file. This has three advantages:
\begin{itemize}
\item It eliminates the step of manually constructing problem sets.
\item No recompilation is necessary to solve different PDE systems.
\item It simplifies reproducibility and facilitates testing for
  different types of systems.
\end{itemize}
Because the mapping has an explicit form, we can derive the required
functions in \eqref{eq:manu_data} in the correct domains by composing
the functions with $\zeta$.

By computing a parametrization of $\Gammax I, \Gammax O$ and $\Gammax
N$, we can also construct the contributions of the integral terms in
the right hand sides of \manusys{}.  The interface is available as
part of the supplementary material for the manuscript.
\subsection{Convergence benchmarking}
We benchmark how the solution of the implementation converges by
solving \manusys{} for consecutively finer microscopic and macroscopic
grids and comparing the numerical error of the approximations.  We
manufacture the following solution:
\begin{equation}
  \begin{split}
    u(x_0, x_1) &= x_1\sin(x_0), \\ v(x_0, x_1, y_0, y_1) &= x_0 + x_1
    + y_0y_1(1-y_1),\\ w(x_0, x_1) &= x_0\cos(x_1),\\ \zeta(x_0, x_1,
    y_0, y_1) &=
    \begin{pmatrix}
      0.4((x_0 + 1.3)+(1.4y_0 - 0.54y_1)) \\ 0.3((x_1 + 1.2)+(-0.4y_0
      + 0.8y_1))
    \end{pmatrix}.
  \end{split}
  \label{eq:manu_system}
\end{equation}
The corresponding data used to solve the system can easily be derived
from \eqref{eq:manu_system} and is presented below.  The truncations
indicated with $\approx$ are only for legibility purposes.
\begin{equation}
  \begin{split}
    f^u &= D^v \left( \frac{324}{5^5} x_0 x_1 - \frac{108}{5^6} x_0 +
    \frac{3^3}{5^4}x_1^2 - \frac{252}{5^6}x_1 -
    \frac{48762}{5^7}\right) + x_1\sin(x_0),\\ f^v &= 2D^vy_0 ,\\ f^w
    &= D^wx_0\cos(x_1) - D^v \left( \frac{324}{5}x_0x_1 +
    \frac{1404}{6} x_0 - \frac{3^3}{5^4} x_1^2 - \frac{3204}{5^6} x_1
    + \frac{131202}{5^8} \right) ,\\ g^u_1 &= x_1\sin(x_0) ,\\ g^u_2
    &= x_1\sin(x_0) ,\\ g^v_1 &\approx D^v (0.6690 y_0 y_1 - 0.6690
    y_0 (1 - y_1) - 0.7432 y_1 (1 - y_1)) \\ \quad &-\kappa_1 x_1
    \sin(x_0) + \kappa_2 (x_0 + x_1 + y_0 y_1 (1 - y_1)) ,\\ g^v_2
    &\approx D^v (-0.6690 y_0 y_1 + 0.6690 y_0 (1 - y_1) + 0.7432 y_1
    (1 - y_1)) \\ \quad &-\kappa_3 x_0 \cos(x_1) + \kappa_4 (x_0 + x_1
    + y_0 y_1 (1 - y_1)) ,\\ g^v_3 &\approx D^v (-0.9778 y_0 y_1 +
    0.9778 y_0 (1 - y_1) + 0.2095 y_1 (1 - y_1)) ,\\ g^w &= D^w x_0
    \cos(x_1) .
  \end{split}
  \label{eq:manu_data}
\end{equation}

This system is solved for $x = (x_0, x_1) \in \Omega = [-1,1]^2$ and
microscopic domains generated by $\zeta$ from \eqref{eq:manu_system}
applied to $y = (y_0, y_1) \in Y_x \subset Z = [-1,1]^2$.  Let us
define the following macroscopic and microscopic error norms:
\begin{align*}
  e_{uw} &:= \|u - u_H\|_\MLL + \|w - w_H\|_\MLL,\\ e_v &:= \|v -
  v_{H,h}\|_\mLL,\\ e_{uw}^\nabla &:= \|u - u_H\|_\MH + \|w -
  w_H\|_\MH,\\ e_{v}^\nabla &:= \|v - v_{H,h} \|_\mH.
\end{align*}
The macroscopic and microscopic errors of the benchmark for
subsequently smaller grids are presented in
\Cref{tab:macro_convergence,tab:micro_convergence}, respectively.  The
error norms in the two tables are the result of the same simulation,
meaning that the order of the rows in the tables correspond to each
other. The mDoFs in \Cref{tab:micro_convergence} is the number of
microscopic degrees of freedom for a single microscopic formulation.
Note that this implies that, for instance, in case of 81 MDoFs, each
with 81 mDoFs, the collective number of degrees of freedom is $81^2 =
6561$.
\begin{table}[ht]
  \centering
  \begin{tabular}{cccccc}
    \toprule MDoFs & $H$ & $e_{uw}$ & $e_{uw}^\nabla$ & $p_M$ & $q_M$
    \\ \midrule 81 & \num{2.500e-01} & \num{7.115e-03} &
    \num{4.594e-02} & - & - \\ 144 & \num{1.818e-01} & \num{3.833e-03}
    & \num{3.257e-02} & 2.150 & 1.196 \\ 289 & \num{1.250e-01} &
    \num{1.794e-03} & \num{2.195e-02} & 2.180 & 1.133 \\ 576 &
    \num{8.696e-02} & \num{8.563e-04} & \num{1.509e-02} & 2.145 &
    1.087 \\ 1089 & \num{6.250e-02} & \num{4.492e-04} &
    \num{1.077e-02} & 2.026 & 1.059 \\ 2116 & \num{4.444e-02} &
    \num{2.225e-04} & \num{7.630e-03} & 2.115 & 1.038 \\ 4225 &
    \num{3.125e-02} & \num{1.124e-04} & \num{5.351e-03} & 1.975 &
    1.026 \\ 8464 & \num{2.198e-02} & \num{5.458e-05} &
    \num{3.758e-03} & 2.079 & 1.017 \\ 16641 & \num{1.562e-02} &
    \num{2.366e-05} & \num{7.162e-03} & 2.112 & 1.008 \\ \bottomrule
  \end{tabular}
  \caption{Macroscopic error and convergence rates for the
    manufactured problems. $p_M$ represents the subsequent observed
    order of convergence of the finite element error, $q_M$ represents
    the subsequent observed order of convergence of the error of its
    gradient.}
  \label{tab:macro_convergence}
\end{table}

\begin{table}[ht]
  \centering
  \begin{tabular}{cccccc}
    \toprule mDoFs & $h$ & $e_v$ &$e_v^\nabla$ & $p_m$ &
    $q_m$\\ \midrule 81 & \num{2.500e-01} & \num{6.191e-03} &
    \num{1.149e-01} & - & - \\ 144 & \num{1.818e-01} & \num{3.188e-03}
    & \num{8.344e-02} & 2.307 & 1.112 \\ 289 & \num{1.250e-01} &
    \num{1.531e-03} & \num{5.733e-02} & 2.106 & 1.078 \\ 576 &
    \num{8.696e-02} & \num{7.575e-04} & \num{3.987e-02} & 2.041 &
    1.053 \\ 1089 & \num{6.250e-02} & \num{3.807e-04} &
    \num{2.865e-02} & 2.160 & 1.038 \\ 2116 & \num{4.444e-02} &
    \num{1.991e-04} & \num{2.037e-02} & 1.952 & 1.027 \\ 4225 &
    \num{3.125e-02} & \num{9.487e-05} & \num{1.432e-02} & 2.144 &
    1.019 \\ 8464 & \num{2.198e-02} & \num{4.831e-05} &
    \num{1.007e-02} & 1.943 & 1.014 \\ 16641 & \num{1.562e-02} &
    \num{2.812e-05} & \num{2.670e-03} & 1.962 & 1.011 \\ \bottomrule
  \end{tabular}
  \caption{Macroscopic error and convergence rates for the
    manufactured problems. The microscopic degrees of freedom (mDoFs)
    are counted for a single microscopic system. $p_m$ represents the
    subsequent observed order of convergence of the finite element
    error, $q_m$ represents the subsequent observed order of
    convergence of the error of its gradient.}
  \label{tab:micro_convergence}
\end{table}
In Figure~\ref{fig:conv_benchmark} we present a graphical
interpretation of the error.
\begin{figure}[!ht]
  \centering \includegraphics[width=0.6\linewidth]{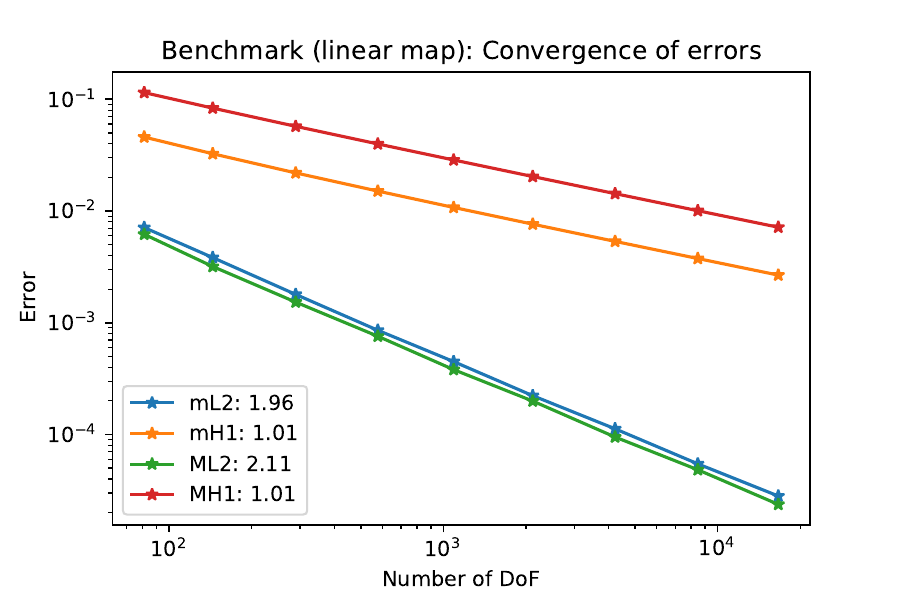}
  \caption{Approximation error as a function of the degrees of
    freedom. The observed order of convergence is computed from the
    final step}\label{fig:conv_benchmark}
\end{figure}
The convergence of the numerical scheme behaves as expected; The
behavior of $e_{uv}$ and $e_{w}$ indicates that the finite element
solution converges quadratically to the solution of the
PDE. Furthermore, $e_v^\nabla$ and $e_{uw}^\nabla$ indicate that the
gradient of the finite element solution converges linearly to the
gradient of the solution, also according to expectation.

\subsection{Parallel benchmarking}
We assess the scalability of the implementation by running it in on a
multicore CPU part of the Swedish HPC cluster Kebnekaise.  This CPU
consists of 28 Intel Xeon nodes with a base frequency of 2.60 GHz.  In
order to test the parallel scalability of the implementation for
different configurations we run two cases: A fine macroscopic grid
with coarse microscopic systems and a coarse macroscopic grid with
fine microscopic grids.  In the first case, the macroscopic system has
4225 degrees of freedom while the microscopic systems each have 81
degrees of freedom. In the second case, the macroscopic system has 81
degrees of freedom while the microscopic systems each have 4225
degrees of freedom.

\begin{figure}[!ht]
  \centering
  \includegraphics[width=0.7\linewidth]{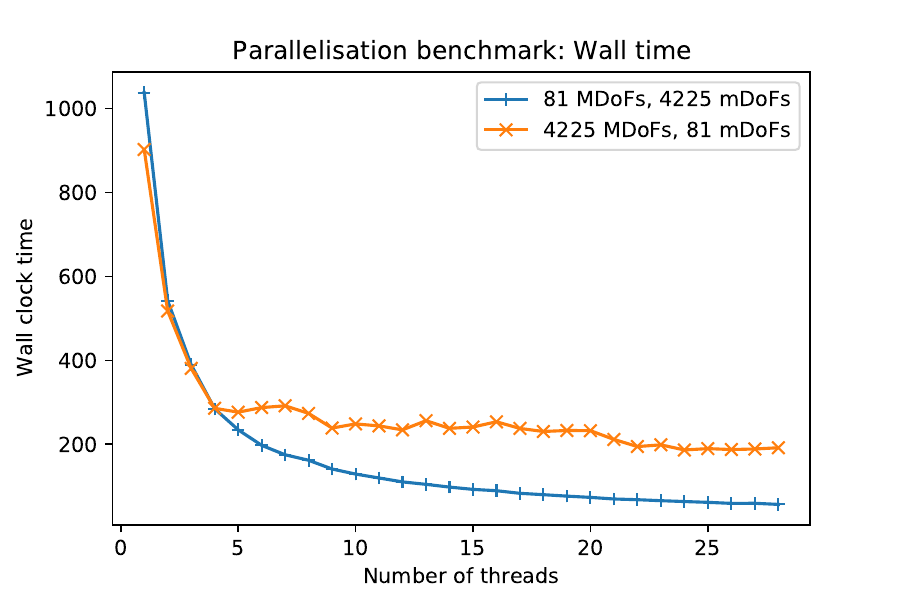}
  \caption{Wall time (total duration of the simulation) as a function
    of the number of nodes for a fine-macro/coarse-micro system and a
    coarse-macro/fine-micro system.}\label{fig:wall-time}
\end{figure}
\begin{figure}
  \centering
  \includegraphics[width=0.7\linewidth]{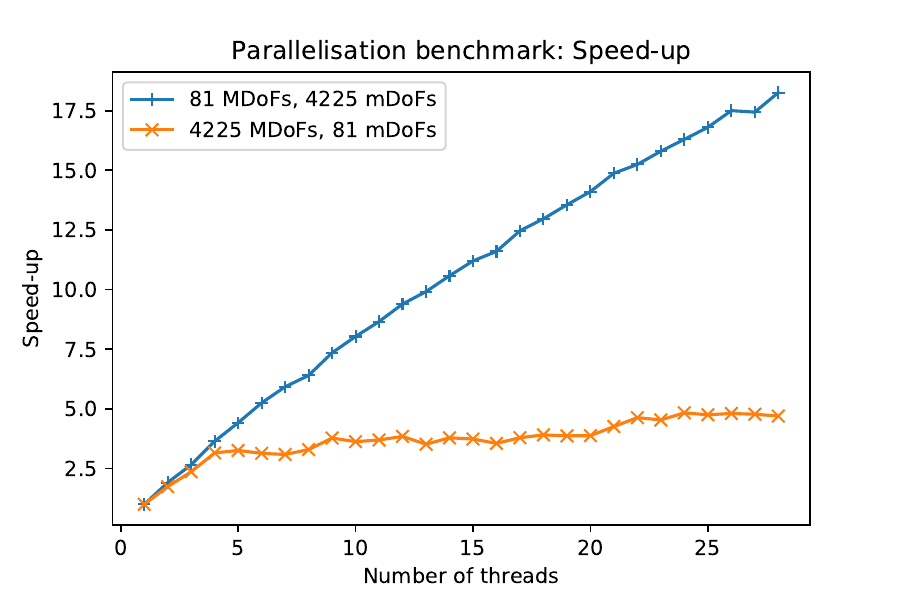}
  \caption{Speed-up as a function of the number of nodes for a
    fine-macro/coarse-macro system and a coarse-macro/fine-micro
    system.}\label{fig:speedup}
\end{figure}

Figure~\ref{fig:wall-time} displays the duration of the simulation as
a function of the number of threads for both configurations.  We
observe great parallelization performance for runs with fine
microscopic grids, but rather poor scalability for runs with fine
macroscopic grids.  This observation is supported by
Figure~\ref{fig:speedup}, displaying the speed-up for increasing
numbers of nodes.  Here it shows that the speed-up of the fine
macroscopic grid trails off after 4 threads, while the fine
microscopic grids remains scalable at the maximum tested number of 28
threads.

That the parallel performance favors a system with a few fine
microscopic grids is a consequence of the implementation. The large
microscopic grids mean that there are a lot of individual cells to be
assembled, resulting in a lot of independent tasks. This is where a
thread-pool performs best.

If one would face the opposite situation; a lot of coarse microscopic
grids, one can rely on alternative parallelization strategies. By
parallelizing per microscopic system rather than parallelizing per
cell, the large number of tasks would once again be independent. The
downside of this strategy is that it since the domain deformations are
non-linear, it requires duplication of the finite element assembly
objects for each microscopic grid, which would place a much larger
demand on memory and increases computational load as well.
\label{sec:experiments}
\subsection{Dependence of the microscopic geometry}
Having tested and benchmarked the implementation, we use it to solve
\sys{} and examine the dependence of the solution of the microscopic
geometry.

We solve \sys{} using the data presented in Table~\ref{tab:num-exp},
using the following domain deformation maps.
\begin{equation}
  \begin{split}
    \zeta_0(x,y) &:= \begin{pmatrix}
      y_1\\y_2\end{pmatrix},\\ \zeta_1(x,y) &:=
      \frac{1}{20} \begin{pmatrix}- 4x_2y_2 + 5y_1(2x_1 +
        3)\\ 5y_2(2-x_2)\end{pmatrix}.
  \end{split}
  \label{eq:data-num-exp}
\end{equation}
We investigate two settings: Case $A$ and Case $B$.  As before,
$\Omega = [-1,1]^2$, $Z = [-1,1]^2$. We model $\partial \Omega^\Dir =
-1 \times \left( -1, 1 \right)$ as a constant source of nutrient
$u$. The macroscopic finite element system has 81 degrees of freedom
and each microscopic system has 4225 degrees of freedom.  In both
cases there is no bulk production of any of the species whilst the
Dirichlet boundary condition imposed on the left-hand boundary is
taken constant. The lack of bulk production terms is enforced to
highlight the role played by the microstructure on the macroscopic
profiles. In Case $A$ the mapping describing the microstructure is
taken to be the identity whilst in Case $B$ a heterogeneous mapping is
chosen which results in more elongated cells as the distance from the
Dirichlet boundary increases with cell-size increasing from top to
bottom. The map in Case $B$ is designed to caricature the
heterogeneity observed due to distance from root tips in real plant
tissues \cite{lockhart1965analysis}. In both cases we set
$D^w\ll{D}^v$ to account for the fact that intracellular transport is
much faster than transport within the tissue.
\begin{table}[ht]
  \centering
  \begin{tabular}{ccccccc}
    \toprule & Case $A$ & Case $B$ \\ \midrule $D^w$ & 0.1 & 0.1
    \\ $D^v$ & 1 & 1 \\ $\kappa_1$ & 0.5 & 0.5 \\ $\kappa_2$ & 1 & 1
    \\ $\kappa_3$ & 0.25 & 0.25 \\ $\kappa_4$ & 1 & 1 \\ $f^u$ & 0 & 0
    \\ $f^w$ & 0 & 0 \\ $u_0$ & 1 & 1 \\ $\zeta(x,y)$ & $\zeta_0(x,y)$
    & $\zeta_1(x,y)$ \\ \bottomrule
  \end{tabular}
  \caption{Parameter sets used in numerical experiments of
    \sys. Symbol definitions are presented in
    \eqref{eq:data-num-exp}. Solutions to each case are plotted in
    Figures \ref{fig:u-case-a}--\ref{fig:v-case-b}.}
  \label{tab:num-exp}
\end{table}

The solutions to \sys{} for the cases presented in
Table~\ref{tab:num-exp} are displayed in
Figures~\ref{fig:u-case-a}--\ref{fig:v-case-b}.  The effect of the
domain deformation is evident. Both on a microscopic level (as
displayed in Figure \ref{fig:v-case-b}) and a macroscopic level (as
seen in Figure \ref{fig:w-case-b}).  We observe a quantitative as well
as a qualitative effect of the mapping in the macroscopic solutions,
specifically the heterogeneous mapping of Case $B$ destroys the
symmetry of the problem along the horizontal center line of the
(macroscopic) domain and leads to a larger gradient in the
concentration of the product of the cells $w$ (compare the scales in
Figures \ref{fig:w-case-a} and \ref{fig:w-case-b}). The main driver
here is the effect the mapping has on the volume and the aspect ratio
of the macroscopic cells. This preliminary numerical investigation
therefore suggests that plant cell geometries could act as a
significant determinant of tissue level concentration profiles in
transport processes in biological tissues.

From a modeling perspective, the coupling of the system comes in play
in two places: the right hand side of the macroscopic equations and
the boundary terms of the microscopic equations.  By tweaking the
parameters, the coupling can be made arbitrarily tight.  For instance;
taking the limit of $D^w$ to 0 yields an algebraic relationship
between $v$ and $w$.  By contrast, choosing $\kappa_i \to 0$ for
$i=1,...,4$ eliminates the relation between $u, w$ and $v$ and results
in a decoupled system.

One of the downsides of this modeling approach is that if the mapping
is anisotropic, the resulting degrees of freedom are not
equidistributed. This can cause some issues in the accuracy of the
microscopic approximations.
\begin{figure}[tp]
  \begin{minipage}[]{0.48\textwidth}
    \centering \includegraphics[width=\linewidth]{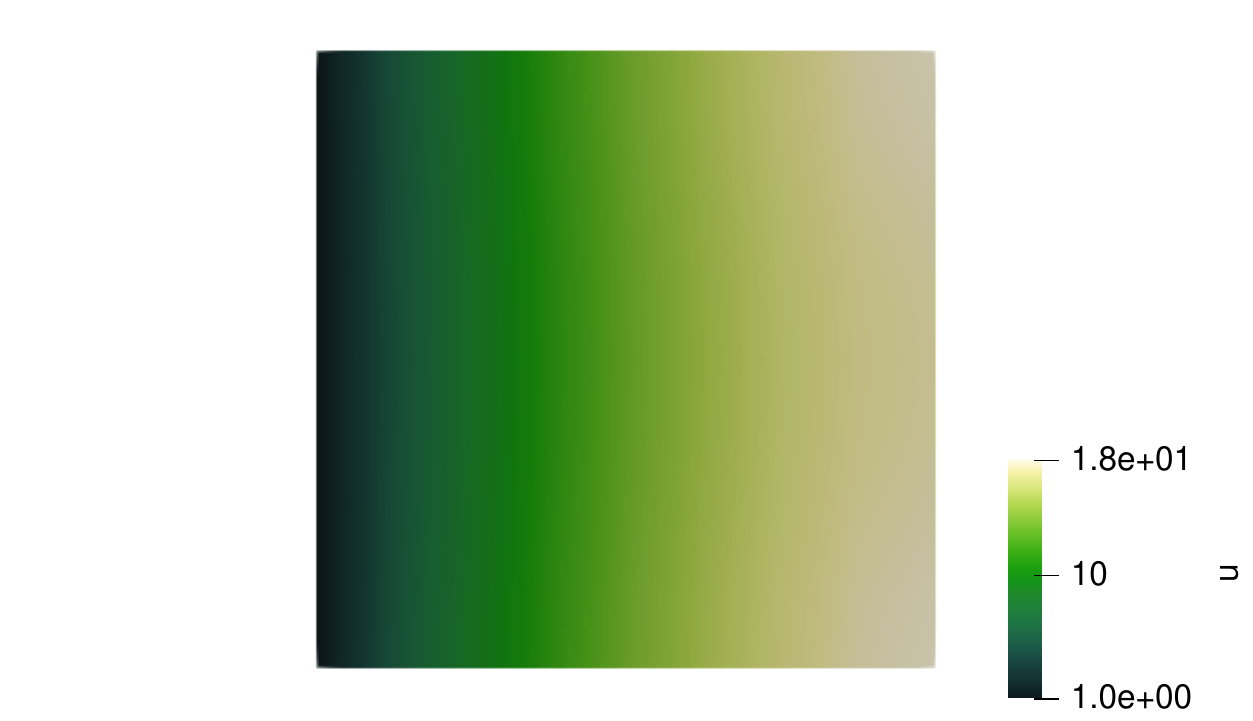}
    \caption{Nutrient concentration $u(x)$ in Case $A$, with the
      Dirichlet boundary (left) as the only source of
      nutrient.}\label{fig:u-case-a}
  \end{minipage}\hfill
  \begin{minipage}[]{0.48\textwidth}
    \centering \includegraphics[width=\linewidth]{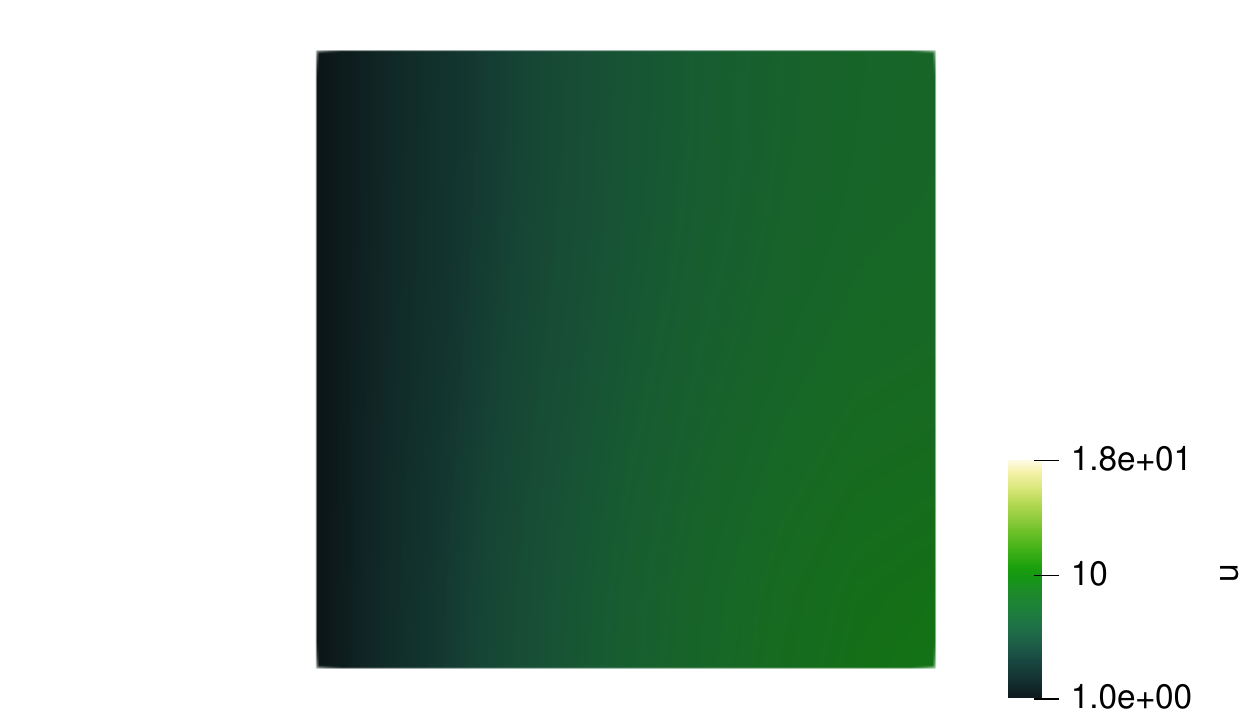}
    \caption{Nutrient concentration $u(x)$ in Case $B$, with the
      Dirichlet boundary (left) as the only source of
      nutrient.}\label{fig:u-case-b}
  \end{minipage}\hfill
  \begin{minipage}[]{0.48\textwidth}
    \centering \includegraphics[width=\linewidth]{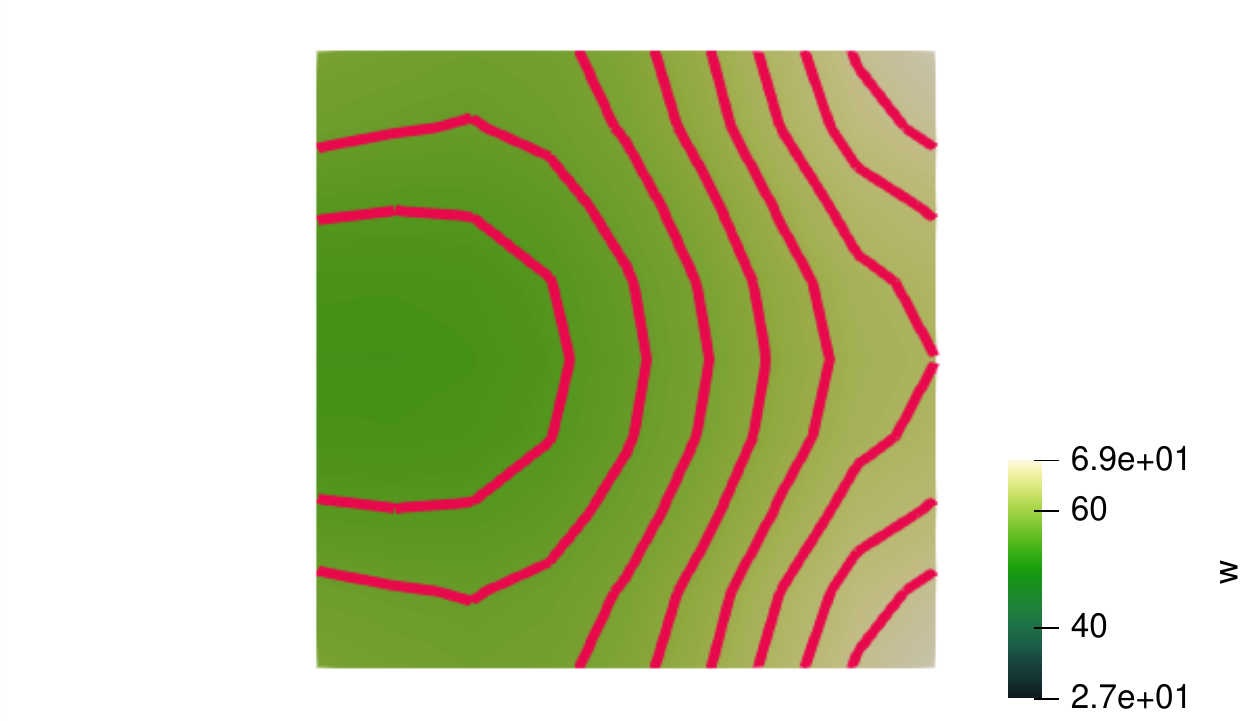}
    \caption{Concentration of the cell product $w(x)$, with the only
      source of the product the cell-based transmission from
      $v(x,y)$. Contour indicates level sets.}\label{fig:w-case-a}
  \end{minipage}\hfill
  \begin{minipage}[]{0.48\textwidth}
    \centering \includegraphics[width=\linewidth]{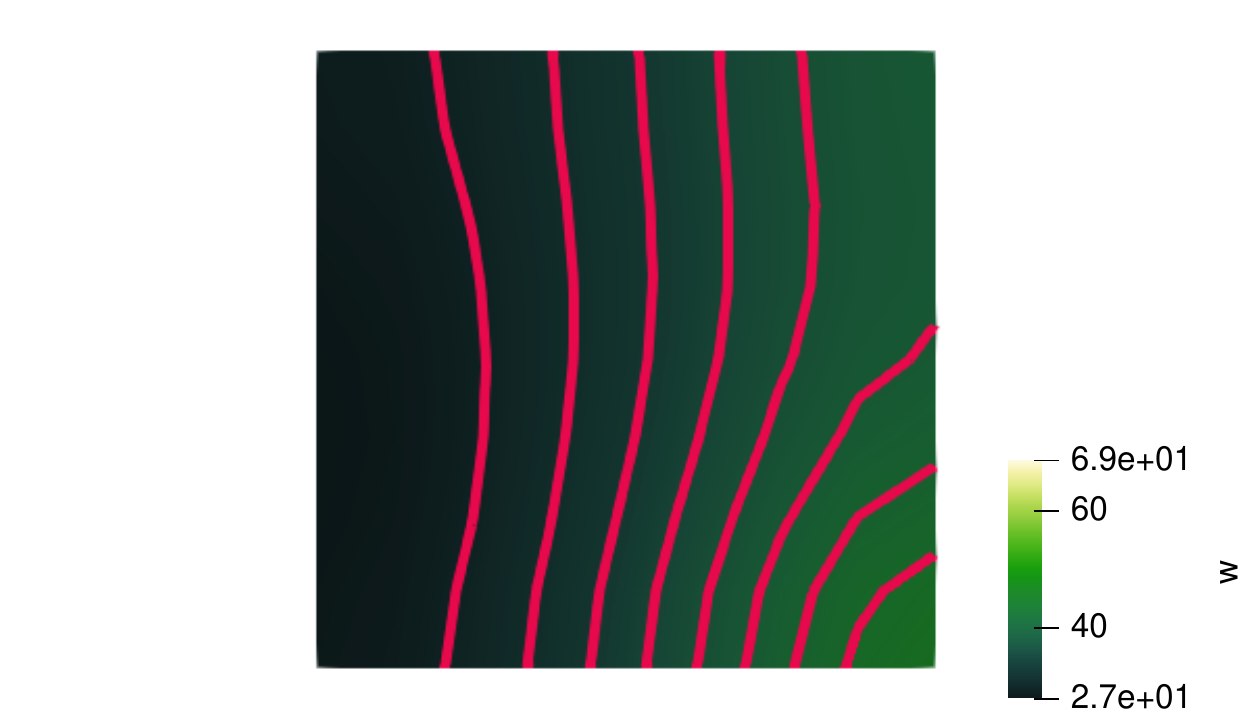}
    \caption{Concentration of the cell product $w(x)$, with the only
      source of the product the cell-based transmission from
      $v(x,y)$. Contour indicates level sets.}\label{fig:w-case-b}
  \end{minipage}\hfill
  \begin{minipage}[]{0.48\textwidth}
    \centering \includegraphics[width=\linewidth]{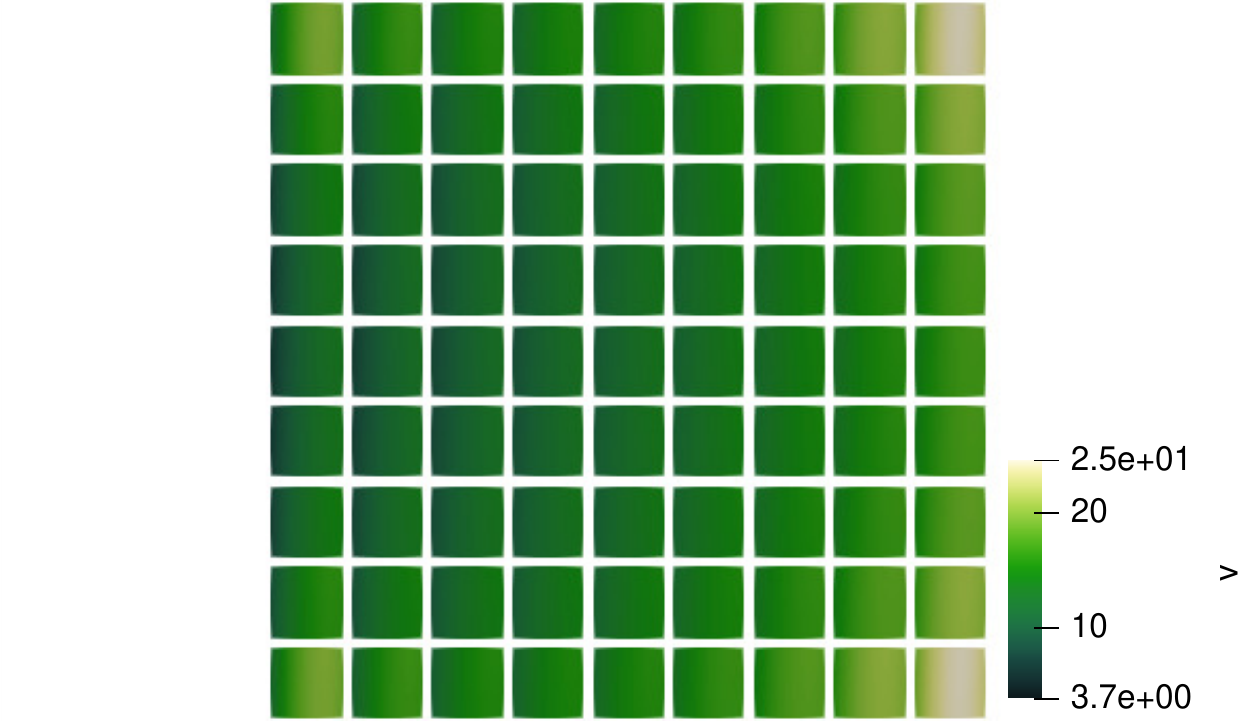}
    \caption{Cell-based nutrient concentration $v(x,y)$ in Case $A$
      with an identity domain mapping}\label{fig:v-case-a}
  \end{minipage}\hfill \begin{minipage}[]{0.48\textwidth}
    \centering \includegraphics[width=\linewidth]{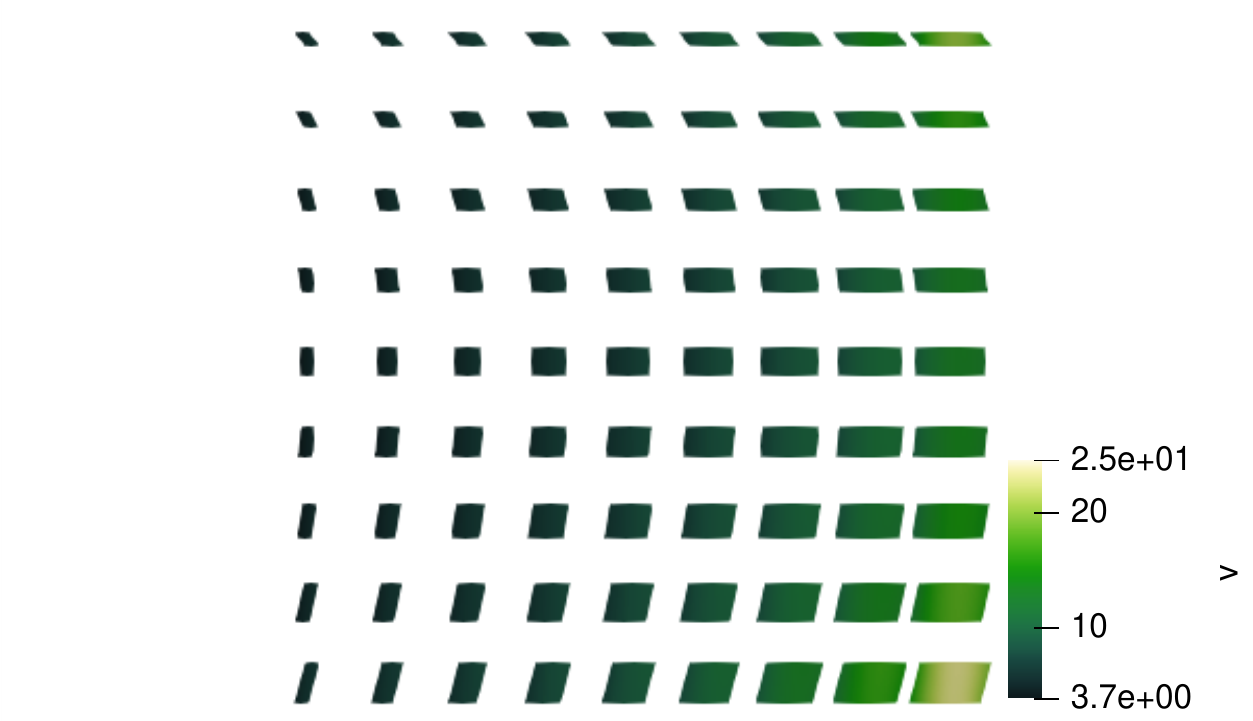}
    \caption{Cell-based nutrient concentration $v(x,y)$, balanced by
      the consumption of $u(x)$ and the production of
      $w(x)$.}\label{fig:v-case-b}
\end{minipage}\end{figure}
\section{Conclusion}
\label{sec:conclusions}
We proposed a parallel finite element treatment of a multiscale model
with distributed heterogeneous microstructures.  To showcase our
framework, we applied it to a system of coupled elliptic PDEs modeling
nutrient transport in plants. After imposing a set of constraints on
the involved parameters and on the regularity of the macroscopic and
microscopic sets so that the underlying coupled system of equations is
weakly solvable, we provided a suitable two-scale finite element
approximation.  Our implementation of this finite element system
provides convergent approximations to the weak solution of the
original system with the expected theoretical order of
convergence. Numerical results demonstrated the influence of the
microstructure geometry on both macroscopic and microscopic solution
profiles. A parallel implementation scales well for the number of
nodes tested.

In practice, the scalability of this setup will not be limited by the
implementation; rather, in many cases there might not be access to
suitable hardware that has a sufficient number of nodes on a single
shared-memory resource.  In order to resolve this, we propose the
following \emph{hybrid} parallelization approach: using a macroscopic
domain decomposition, we can use classic PDE parallelization
techniques to partition the domain on different CPUs, while using the
multithreading method to solve each subdomain in parallel on different
nodes.  This will retain the excellent scaling of the multithreading
approach, while also taking full advantage of the performance of
parallel domain decomposition. Parallel domain decomposition for
single scale problems performs well (see
\cite[e.g.]{yagawa93,klawonn10}) and has been applied in different
multiscale finite element contexts as well (see \cite[e.g.]{hou97}).
The heterogeneous microstructures are represented and implemented as a
continuous map. This allows us to model and compute a family of
microscopic domains without explicitly creating meshes for it.  The
model problem illustrates that, even with relatively simple domain
mappings, one retains a lot of modeling flexibility.

\AM{From the methodological point of view, it is worth noting that
  fixing-boundary techniques are efficient tools regularly used for
  the solvability analysis of free- and moving-boundary problems and
  they have been employed as well for upscaling studies of such
  evolution systems (compare
  \cite{PeterCRAS2006,Wied,eden2024}). Moving sets in high-dimensional
  spaces are notoriously hard to deal with. It is therefore a natural
  question to ask to which extent transformation tools (like the
  successful Hanzawa mapping) can be used as well for both numerical
  computations and error analysis studies of multiscale problems.}

Future improvements of this approach should include ways of refining
computations at the level of the microscopic systems as a function of
error indicators and local geometry.  For instance, one improvement to
this finite element strategy would be to compute a global microscopic
functional: an error indicator defined over all microscopic grids.
This quantity could then be used to refine the reference cell and
maximize globally the error reduction.  Inspired by ideas from
\cite{LeBris}, another approach would be using random diffeomorphisms
instead of deterministic ones.  This would allow us to include, under
specific conditions, distributions of microscopic defects. It even
allows for cases where the choice of the model parameters depends on
stationary random ergodic microstructures.  We plan to address some of
these challenges in a forthcoming work.

\section*{Acknowledgments}
OR would like to gratefully acknowledge partial financial support from
the Institute of Mathematics and its Applications, the Marie
Skłodowska--Curie ModCompShock ITN and thank the University of Sussex
for the kind hospitality. AM thanks SNIC project 2020/9-178 (HPC2N)
\AM{and NAISS 2023/22-1283 }for computational resources and
acknowledges the grant VR 2018-03648 ``\textit{Homogenization and
  dimension reduction of thin heterogeneous layers}'' for partial
financial support.

\bibliographystyle{abbrvnat}

\end{document}